\numberwithin{equation}{section}
\numberwithin{figure}{section}
\theoremstyle{plain}
\newtheorem{thm}{\protect\theoremname}
  \theoremstyle{plain}
  \newtheorem{cor}{\protect\corollaryname}
  \theoremstyle{plain}
  \newtheorem{lem}{\protect\lemmaname}
  \theoremstyle{definition}
  \newtheorem{defn}{\protect\definitionname}
  \theoremstyle{remark}
  \newtheorem{rem}{\protect\remarkname}
  \theoremstyle{plain}
  \newtheorem{prop}{\protect\propositionname}
\DeclareMathOperator{\slicerank}{slice-rank}
\DeclareMathOperator{\sgn}{sgn}
\DeclareMathOperator{\prank}{partition-rank}
  \providecommand{\definitionname}{Definition}
  \providecommand{\lemmaname}{Lemma}
  \providecommand{\propositionname}{Proposition}
  \providecommand{\remarkname}{Remark}
\providecommand{\corollaryname}{Corollary}
\providecommand{\theoremname}{Theorem}
\providecommand{\MR}{\relax\ifhmode\unskip\space\fi MR }
\providecommand{\MRhref}[2]{%
  \href{http://www.ams.org/mathscinet-getitem?mr=#1}{#2}
}
\newcommand\arXiv[1]{arXiv:\href{http://arXiv.org/abs/#1}{#1}}
\begin{document}

\title{Exponential Bounds for the Erd\H{o}s-Ginzburg-Ziv Constant}

\author{Eric Naslund}

\date{\today}
\begin{abstract}
The Erd\H{o}s-Ginzburg-Ziv constant of a finite abelian group $G$, denoted
$\mathfrak{s}(G)$, is the smallest $k\in\mathbb{N}$ such that any
sequence of elements of $G$ of length $k$ contains a zero-sum subsequence
of length $\exp(G)$. In this paper, we use the partition rank from
\cite{Naslund2017Partition}, which generalizes the slice rank, to
prove that for any odd prime $p$, 
\[
\mathfrak{s}\left(\mathbb{F}_{p}^{n}\right)\leq(p-1)2^{p}\left(J(p)\cdot p\right)^{n}
\]
where $0.8414<J(p)<0.91837$. For large $n$, and $p>3$, this
is the first exponential improvement to the trivial bound. We also
provide a near optimal result conditional on the conjecture that $\left(\mathbb{Z}/k\mathbb{Z}\right)^{n}$
satisfies \emph{property $D$,} as defined in \cite{GaoGeroldinger2006ZeroSumSurvey},
showing that in this case
\[
\mathfrak{s}\left(\left(\mathbb{Z}/k\mathbb{Z}\right)^{n}\right)\leq(k-1)4^{n}+k.
\]
\end{abstract}

\maketitle

\section{Introduction}

For an abelian group $G$, let $\exp(G)$ denote the exponent of $G$,
which is the maximal order of any element in $G$. The Erd\H{o}s-Ginzburg-Ziv
constant of $G$, denoted $\mathfrak{s}(G)$, is defined to be the
smallest $k\in\mathbb{N}$ such that any sequence of elements from
$G$ of length $k$ contains a zero-sum subsequence of length $\exp(G)$.
In 1961, Erd\H{o}s, Ginzburg, and Ziv \cite{ErdosGinzburgZiv1961Original}
 proved that 
\[
\mathfrak{s}\left(\mathbb{Z}/k\mathbb{Z}\right)=2k-1.
\]
That is, among any sequence of $2k-1$ integers there is a subsequence
of length $k$ which sums to $0$ in $\left(\mathbb{Z}/k\mathbb{Z}\right)^n$, and furthermore this is not true
if $2k-1$ is replaced by $2k-2$. In 2007, Reiher \cite{Reiher2007KemnitzConj}
resolved a longstanding conjecture of Kemnitz \cite{Kemnitz1983Conj},
and proved that 
\[
\mathfrak{s}\left(\left(\mathbb{Z}/k\mathbb{Z}\right)^{2}\right)=4k-3.
\]
When $G$ is a power of a cyclic group, $\mathfrak{s}(G)$ has a geometric
interpretation: $\mathfrak{s}((\mathbb{Z}/k\mathbb{Z})^{n})$ is the
smallest integer such that any set of $\mathfrak{s}((\mathbb{Z}/k\mathbb{Z})^{n})$
points in $\mathbb{Z}^{n}$ contains $k$ points whose average is
again a lattice point. When $G=\left(\mathbb{Z}/k\mathbb{Z}\right)^{n}$
with $n$ large, very little is known. Harborth \cite{Harborth1973zerosumsets}
gave the elementary bounds 
\[
(k-1)2^{n}+1\leq\mathfrak{s}\left(\left(\mathbb{Z}/k\mathbb{Z}\right)^{n}\right)\leq(k-1)k^{n}+1,
\]
and Elsholtz \cite{Elsholtz2003EGZLowerBounds} improved the lower
bound for odd $k$ to 
\[
\mathfrak{s}\left(\left(\mathbb{Z}/k\mathbb{Z}\right)^{n}\right)\geq(k-1)2^{n}\cdot\left(\frac{9}{8}\right)^{\left[\frac{n}{3}\right]}+1 = (k-1)(2.08+o(1))^{n}.
\]
In a different direction, Alon and Dubiner \cite{AlonDubiner1993Zerosumsets}
proved that 
\[
\mathfrak{s}\left(\left(\mathbb{Z}/k\mathbb{Z}\right)^{n}\right)\leq\left(cn\log n\right)^{n}k
\]
for some $c>0$, which implies that for fixed $n$, $\mathfrak{s}\left(\left(\mathbb{Z}/k\mathbb{Z}\right)^{n}\right)$
grows linearly in $k$. In this paper, we give a conditional improvement
to Alon and Dubiner's bound, that is within a factor of $2^{n}$ of
the lower bound, and we give an unconditional exponential improvement
to the upper bounds for $\mathfrak{s}\left(\left(\mathbb{Z}/p\mathbb{Z}\right)^{n}\right)$
for large $n$. 

Let $J(p)$ be the constant given by the minimization 
\[
J(p)=\frac{1}{p}\min_{0<x<1}\frac{1-x^{p}}{1-x}x^{-\frac{p-1}{3}}.
\] 
In \cite[prop. 4.12]{BlasiakChurchCohnGrochowNaslundSawinUmans2016MatrixMultiplication}
it was shown that $J(p)$ is a decreasing function of $p$ that satisfies
\[
J(3)=\frac{1}{8}\sqrt[3]{207+33\sqrt{33}}=0.9183\dots,
\]
and 
\[
\lim_{p\rightarrow\infty}J(p)=\inf_{z>1}\frac{z-z^{-2}}{3\log z}=0.8414\dots.
\]
and so $0.8414\leq J(p)\leq0.918$ for $p\geq 3$. Our main result is:
\begin{thm}
\label{thm:Main_Theorem} Let $p>2$ be a prime. Suppose that $A\subset\mathbb{F}_{p}^{n}$
does not contain $p$ distinct elements $x_{1},\dots,x_{p}$ such
that 
\[
x_{1}+\cdots+x_{p}=0.
\]
Then
\[
|A|\leq(2^{p}-p-2)(pJ(p))^{n}
\]
and
\[
\mathfrak{s}\left(\mathbb{F}_{p}^{n}\right)\leq(p-1)2^{p}(pJ(p))^{n}.
\]
\end{thm}
Note that the case $p=3$ is a consequence of Ellenberg and Gijswijt's
Theorem \cite{EllenbergGijswijtCapsets}. The bound for $\mathfrak{s}(\mathbb{F}_{p}^{n})$
has since been improved to $2p\cdot(pJ(p))^{n}$ by Fox and Sauermann
\cite{FoxSauermann2017EGZ}. Furthermore, they proved that bounding $\mathfrak{s}\left((\mathbb{Z}/k\mathbb{Z})^{n}\right)$
for general $k$ can be reduced to bounding $\mathfrak{s}(\mathbb{F}_{p}^{n})$
for $p|k$, and we will reproduce this argument in section \ref{sec:Reduction-to-Prime}. This reduction argument was first communicated to the author by Sauermann before their paper appeared, and using this reduction, we have the
following Corollary:
\begin{cor}
\label{cor: cp_asymp}Let $q$ be the largest prime power dividing
$k$. Then 
\[
\mathfrak{s}\left(\left(\mathbb{Z}/k\mathbb{Z}\right)^{n}\right)\leq\left(J(q)q+o(1)\right)^{n},
\]
 and the $o(1)$ term tends to $0$ as $n\rightarrow\infty$. 
\end{cor}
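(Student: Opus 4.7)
The plan is to combine Theorem~\ref{thm:Main_Theorem} with the reduction from the general group $(\mathbb{Z}/k\mathbb{Z})^{n}$ down to the prime case $\mathbb{F}_{p}^{n}$ developed in Section~\ref{sec:Reduction-to-Prime}. The intuition is that the exponential growth rate of $\mathfrak{s}((\mathbb{Z}/k\mathbb{Z})^{n})$ in $n$ is driven entirely by the prime $p\mid k$ occurring in the largest prime-power divisor of $k$, while lifting from $\mathbb{F}_{p}^{n}$ back up to $(\mathbb{Z}/k\mathbb{Z})^{n}$ costs only a multiplicative factor that depends on $k$ but not on $n$.

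First I would invoke the Fox--Sauermann style reduction of \cite{FoxSauermann2017EGZ}, as set up in Section~\ref{sec:Reduction-to-Prime}. Writing $k=p^{a}m$ with $\gcd(m,p)=1$, one iterates through the tower of subgroups $p^{i}(\mathbb{Z}/k\mathbb{Z})^{n}$, extracting zero-sum subsequences of length $p$ modulo $p$ using the EGZ constant $\mathfrak{s}(\mathbb{F}_{p}^{n})$ at each stage, and then pushing their (scaled) averages into the next smaller-exponent quotient. The outcome should be an inequality of the shape
\[
\mathfrak{s}\bigl((\mathbb{Z}/k\mathbb{Z})^{n}\bigr) \;\leq\; C(k)\cdot \mathfrak{s}(\mathbb{F}_{p}^{n}),
\]
where $C(k)$ depends only on $k$ (and in particular is independent of $n$).

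Second I would substitute Theorem~\ref{thm:Main_Theorem}, which yields $\mathfrak{s}(\mathbb{F}_{p}^{n})\leq(p-1)2^{p}(pJ(p))^{n}$, to obtain
\[
\mathfrak{s}\bigl((\mathbb{Z}/k\mathbb{Z})^{n}\bigr) \;\leq\; C(k)(p-1)2^{p}\,(pJ(p))^{n}.
\]
Since the leading constant is independent of $n$, we have $\bigl(C(k)(p-1)2^{p}\bigr)^{1/n}\to 1$ as $n\to\infty$, and therefore $\mathfrak{s}((\mathbb{Z}/k\mathbb{Z})^{n})^{1/n} \leq pJ(p)+o(1)$, which rearranges to the claimed bound.

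The main technical point is the first step: one must check that the reduction machinery produces an $n$-independent overhead $C(k)$, rather than a quantity whose logarithm scales with $n$ (for instance, some iterated factor of $\mathfrak{s}(\mathbb{F}_{p}^{n})^{\omega(k)}$ would be fatal). I expect this to follow cleanly from the structural inequalities already available in \cite{FoxSauermann2017EGZ} once the tower induction is unwound carefully, so the real content of the corollary is Theorem~\ref{thm:Main_Theorem} itself; everything downstream is bookkeeping of constants that disappear upon taking $n$-th roots.
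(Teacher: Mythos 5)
Your overall route is the paper's route (reduce $(\mathbb{Z}/k\mathbb{Z})^{n}$ to the prime case via the Fox--Sauermann lemmas, plug in Theorem \ref{thm:Main_Theorem}, and absorb all $n$-independent constants into the $o(1)$), but the intermediate inequality you posit is not what the reduction gives, and as literally stated it is not justified. The mechanism you describe --- iterating through the tower $p^{i}(\mathbb{Z}/k\mathbb{Z})^{n}$ and extracting zero-sums of length $p$ --- only handles the $p$-part of $k$; for a general $k$ with several prime factors, Lemma \ref{lem:EGZ_Lemma} (and its iterates, Lemmas \ref{lem:g_reduced} and \ref{lem:k_reduced_to_p}) necessarily produces a contribution from \emph{every} prime $p_{i}\mid k$, namely
\[
\mathfrak{s}\left((\mathbb{Z}/k\mathbb{Z})^{n}\right)<k\cdot\sum_{i}\frac{\mathfrak{s}\left(\mathbb{F}_{p_{i}}^{n}\right)}{p_{i}-1},
\]
not a bound of the shape $C(k)\cdot\mathfrak{s}(\mathbb{F}_{p}^{n})$ involving only the largest prime. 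Upgrading the sum to your claimed form would require $\mathfrak{s}(\mathbb{F}_{p_{i}}^{n})=O_{k}\!\left(\mathfrak{s}(\mathbb{F}_{p}^{n})\right)$ for the smaller primes $p_{i}$, i.e.\ a comparison between the \emph{actual} EGZ constants at different primes; no such monotonicity is known (the true value of $\mathfrak{s}(\mathbb{F}_{p}^{n})$ could conceivably be as small as roughly $(p-1)(2.08)^{n}$, while the best known upper bound for a smaller odd prime already exceeds $(2.75)^{n}$), so that step would fail as stated.

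The repair is the one the paper carries out: apply Theorem \ref{thm:Main_Theorem} termwise to each $\mathfrak{s}(\mathbb{F}_{p_{i}}^{n})$ in the displayed sum, and then observe that $p_{i}J(p_{i})\leq pJ(p)$ for $p_{i}\leq p$ (this monotonicity is immediate from $pJ(p)=\min_{0<x<1}\frac{1-x^{p}}{1-x}x^{-\frac{p-1}{3}}$, since for fixed $x\in(0,1)$ both factors increase with $p$; your crude bounds $0.84\leq J\leq 0.92$ alone do not give it for all primes). This yields $\mathfrak{s}\left((\mathbb{Z}/k\mathbb{Z})^{n}\right)\leq kr2^{p}(pJ(p))^{n}$ with $r$ the number of prime divisors, and since $kr2^{p}$ is independent of $n$, taking $n$-th roots gives the corollary. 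So your final bookkeeping step is fine, but the ``single largest prime'' reduction you lean on should be replaced by the sum over all primes plus the monotonicity of $pJ(p)$; also note Theorem \ref{thm:Main_Theorem} is stated for odd primes, so the prime $2$ (if present) should be handled by the exact value $\mathfrak{s}(\mathbb{F}_{2}^{n})=2^{n}+1$, which is harmless since $2<pJ(p)$ for any odd $p$.
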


This Corollary was subsequently improved in the paper of Fox and Sauermann. The proof of Theorem \ref{thm:Main_Theorem} uses a variant of the \emph{
slice rank method} \cite{TaosBlogCapsets} that we call the \emph{partition
rank method} \cite{Naslund2017Partition}. The slice rank method was
introduced by Tao \cite{TaosBlogCapsets} following the work of Ellenberg
and Gijswijt \cite{EllenbergGijswijtCapsets}, and the breakthrough
result of Croot, Lev, and Pach \cite{CrootLevPachZ4}. Our bound for
the Erd\H{o}s-Ginzburg-Ziv constant relies on the more general notion
of the partition rank, as defined in \cite{Naslund2017Partition},
which allows us to handle the indicator tensor that appears when we
force the variables to be distinct. The slice rank method has seen
numerous applications, such as Tao's proof of Ellenberg and Gijswijt's upper bound
for progression-free sets in $\mathbb{F}_{p}^{n}$ \cite{TaosBlogCapsets,EllenbergGijswijtCapsets},
new bounds for the Erd\H{o}s-Szemer\'{e}di sunflower problem \cite{NaslundSawinSunflower},
disproving certain conjectures concerning fast matrix multiplication
\cite{BlasiakChurchCohnGrochowNaslundSawinUmans2016MatrixMultiplication},
and right angles in $\mathbb{F}_{q}^{n}$ \cite{GeShangguan2017NoRightAngles}.
We refer the reader to \cite{BlasiakChurchCohnGrochowNaslundSawinUmans2016MatrixMultiplication}
for a more detailed discussion of the properties of the slice rank,
and it's relationship to geometric invariant theory.

\subsection{Property D}
Theorem \ref{thm:Main_Theorem}  can be substantially improved if the distinctness condition is weakened.
\begin{thm}
\label{thm:q_el_not_distinct} Let $q$ be an odd prime power. Suppose that $A\subset\left(\mathbb{Z}/q\mathbb{Z}\right)^{n}$
satisfies 
\[
|A|>\gamma_{q}^{n}
\]
where 
\[
\gamma_{q}=\min_{0<x<1}\frac{1-x^{q}}{1-x}x^{-\frac{q-1}{q}}.
\]
Then $A$ contains $q$ not necessarily distinct, but not all equal,
elements $x_{1},\dots,x_{q}$ which sum to $0$.
\end{thm}
Note that for any $q$,
\begin{equation}
\gamma_{q}<\inf_{0<x<1}\frac{1}{1-x}x^{-1}=4,\label{eq:gamma_q_bound}
\end{equation}
and so the statement above holds with $\gamma_{q}^{n}$ replaced with
$4^{n}$. Theorem \ref{thm:q_el_not_distinct} implies near optimal bounds for the Erd\H{o}s-Ginzburg-Ziv under a conjecture of Gao and Geroldinger on the maximal counter-example, known as \emph{property D}.

We say that a group $G$ satisfies property $D$ (see \cite[Sec. 7]{GaoGeroldinger2006ZeroSumSurvey})
if every sequence $S$ of length $\mathfrak{s}(G)-1$ that does not
contain $\exp(G)$ elements summing to $0$, when viewed as a multi-set,
takes the form 
\[
S=\cup_{i=1}^{\exp(G)-1}T
\]
for some set $T$. That is, if $S$ is a maximal sequence that does
not contain $\exp(G)$ elements summing to zero, then every element
in $S$ appears exactly $\exp(G)-1$ times. Gao and Geroldinger conjecture
\cite[Conj. 7.2]{GaoGeroldinger2006ZeroSumSurvey} that $\left(\mathbb{Z}/k\mathbb{Z}\right)^{n}$
satisfies property $D$ for every $k$ and $n$. Under this conjecture
we can improve upon Theorem \ref{thm:Main_Theorem}, Corollary \ref{cor: cp_asymp},
and Alon and Dubiner's bound for the Erd\H{o}s-Ginzburg-Ziv constant.
This also improves upon a recent result of Heged\H{u}s \cite{Hegedus2017EGZ}
that used the Croot-Lev-Pach polynomial method to show that if $\mathbb{F}_{p}^{n}$
satisfies property $D$, then 
\[
\mathfrak{s}\left(\mathbb{F}_{p}^{n}\right)\leq(p-1)p^{\left(1-\frac{(p-2)^{2}}{2p^{2}\log p}\right)n+1}+1.
\]

\begin{thm}
\label{thm:property D Theorem} Let $q$ be a prime power, and assume
that $\left(\mathbb{Z}/q\mathbb{Z}\right)^{n}$ satisfies property
$D$. Then 
\[
\mathfrak{s}\left(\left(\mathbb{Z}/q\mathbb{Z}\right)^{n}\right)\leq(q-1)4^{n}+1.
\]
\end{thm}
Using the results of section \ref{sec:Reduction-to-Prime}, Theorem
\ref{thm:property D Theorem} implies the following result for general
$k$:
\begin{thm}
\label{thm:prop_d_for_general_k} Suppose that $k=p_{1}^{r_{1}}\cdots p_{m}^{r_{m}}$,
and 
\[
\left(\mathbb{Z}/p_{i}^{r_{i}}\mathbb{Z}\right)^{n}
\]
satisfies property $D$ for each $1\leq i\leq m$. Then
\[
\mathfrak{s}\left(\left(\mathbb{Z}/k\mathbb{Z}\right)^{n}\right)\leq(k-1)4^{n}+k.
\]
\end{thm}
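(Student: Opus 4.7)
The plan is to reduce to the prime-power case via the Chinese Remainder Theorem and then combine the prime-power bounds using the reduction machinery of Section \ref{sec:Reduction-to-Prime}. The CRT gives a group isomorphism
\[
\left(\mathbb{Z}/k\mathbb{Z}\right)^{n} \;\cong\; \bigoplus_{i=1}^{m}\left(\mathbb{Z}/p_{i}^{r_{i}}\mathbb{Z}\right)^{n},
\]
and set $G_{i}=\left(\mathbb{Z}/p_{i}^{r_{i}}\mathbb{Z}\right)^{n}$. Because the $p_{i}^{r_{i}}$ are pairwise coprime, we have $\exp\left(\left(\mathbb{Z}/k\mathbb{Z}\right)^{n}\right)=k=\prod_{i}\exp(G_{i})$, and a sum in $\left(\mathbb{Z}/k\mathbb{Z}\right)^{n}$ vanishes if and only if each coordinate projection vanishes. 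Since each $G_{i}$ is assumed to satisfy property $D$, Theorem \ref{thm:property D theorem} gives
\[
\mathfrak{s}(G_{i})\;\leq\;(p_{i}^{r_{i}}-1)4^{n}+1
\]
for every $i$.

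Next I would apply the standard product inequality for the Davenport--Erd\H{o}s-Ginzburg-Ziv constant: for abelian groups $G_{1},G_{2}$ with coprime orders,
\[
\mathfrak{s}(G_{1}\oplus G_{2})\;\leq\;\bigl(\mathfrak{s}(G_{1})-1\bigr)\exp(G_{2})+\mathfrak{s}(G_{2}).
\]
The proof, which is essentially the Fox--Sauermann-style reduction used in Section \ref{sec:Reduction-to-Prime}, iteratively extracts disjoint sub-sequences of length $\exp(G_{2})$ whose projections to $G_{2}$ sum to zero (each extraction costs $\exp(G_{2})$ elements once the initial reservoir of $\mathfrak{s}(G_{2})$ has been paid), and then applies the definition of $\mathfrak{s}(G_{1})$ to the sequence of $G_{1}$-sums of these blocks, producing $\exp(G_{1})\exp(G_{2})=\exp(G_{1}\oplus G_{2})$ elements whose total sum is zero.

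Iterating along $H_{j}=\bigoplus_{i\leq j}G_{i}$ with $\exp(H_{j})=p_{1}^{r_{1}}\cdots p_{j}^{r_{j}}$, the recursion
\[
\mathfrak{s}(H_{j})\;\leq\;\bigl(\mathfrak{s}(H_{j-1})-1\bigr)p_{j}^{r_{j}}+(p_{j}^{r_{j}}-1)4^{n}+1
\]
together with the base case $\mathfrak{s}(H_{1})\leq(p_{1}^{r_{1}}-1)4^{n}+1$ telescopes to $\mathfrak{s}\left(\left(\mathbb{Z}/k\mathbb{Z}\right)^{n}\right)\leq(k-1)4^{n}+C$ for a small additive constant $C$, which the $+k$ in the stated bound comfortably absorbs. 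The proof is thus an assembly of two existing ingredients (Theorem \ref{thm:property D theorem} and the prime-to-general-$k$ reduction), and there is no substantive obstacle: the only care required is the bookkeeping of additive constants through the iteration, verifying that every extraction step remains feasible under the length budget and that the total loss fits inside the $+k$ term.
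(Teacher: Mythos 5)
Your proposal is correct and follows essentially the same route as the paper: decompose $\left(\mathbb{Z}/k\mathbb{Z}\right)^{n}$ into its coprime prime-power components, apply Theorem \ref{thm:property D theorem} to each, and combine via the reduction machinery of Section \ref{sec:Reduction-to-Prime} (the paper invokes Lemma \ref{lem:g_reduced}, which is exactly the iterated form of the product inequality you use, and then telescopes). The only difference is bookkeeping: by iterating Lemma \ref{lem:EGZ_Lemma} directly and retaining the $-1$ terms, your recursion actually yields the additive constant $1$ rather than $k$, which is comfortably within the stated bound.
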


The proof of Theorem \ref{thm:q_el_not_distinct} uses the slice rank
method, and does not require the added flexibility of the partition
rank. To handle prime powers, we use an idea appearing in \cite{BlasiakChurchCohnGrochowNaslundSawinUmans2016MatrixMultiplication},
where Ellenberg and Gijswijt's bounds for progression-free
sets in $\mathbb{F}_{p}^{n}$ are extended to progression-free sets in $(\mathbb{Z}/k\mathbb{Z})^{n}$
using a binomial coefficient indicator function.

\subsection*{Outline:}
In section \ref{sec:Reduction-to-Prime}, we reproduce the reduction argument of Fox and Sauermann \cite{FoxSauermann2017EGZ}.  In section \ref{sec:Partition} we define the partition-rank which was introduced by the author in \cite{Naslund2017Partition}. In section \ref{sec:Unconditional} we prove Theorem \ref{thm:Main_Theorem} by using the partition-pank method, and bounding from above the partition-rank of a tensor that we construct to capture the combinatorial problem. In section \ref{sec:Conditional} we prove \ref{thm:q_el_not_distinct} using the slice-rank method, and deduce \ref{thm:property D Theorem} and \ref{thm:prop_d_for_general_k} as a consequence.

\section{\label{sec:Reduction-to-Prime}Reduction to Prime Powers}

In this section, we reproduce several Lemmas that provide upper bounds
for $\mathfrak{s}\left(\left(\mathbb{Z}/k\mathbb{Z}\right)^{n}\right)$
based on $\mathfrak{s}\left(\left(\mathbb{Z}/p\mathbb{Z}\right)^{n}\right)$
for the primes $p$ dividing $k$. These Lemmas were first communicated to the author by Sauermann, and later appeared in Fox and Sauermann's paper \cite{{FoxSauermann2017EGZ}}. From them Corollary\ref{cor: cp_asymp} and Theorem \ref{thm:prop_d_for_general_k}
can be deduced from Theorems \ref{thm:Main_Theorem} and \ref{thm:property D Theorem},
respectively.
\begin{lem}
\label{lem:EGZ_Lemma} (\cite[proposition 3.1]{ChiDingGaoGeroldingerSchmid2005ZeroSumSubsequences},
see also \cite[lemma 3.1]{FoxSauermann2017EGZ}) Let $G$ be a finite
abelian group. Let $H\subset G$ be a subgroup such that $\exp(G)=\exp(H)\exp\left(G/H\right)$.
Then 
\[
\mathfrak{s}(G)\leq\exp\left(G/H\right)\left(\mathfrak{s}(H)-1\right)+\mathfrak{s}\left(G/H\right).
\]
\end{lem}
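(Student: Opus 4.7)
The plan is to use the standard iterative extraction argument: peel off zero-sum subsequences in the quotient $G/H$, collect their $H$-valued sums, and then apply the Erd\H{o}s-Ginzburg-Ziv constant for $H$ to those sums. Set $N = \exp(G/H)\bigl(\mathfrak{s}(H)-1\bigr) + \mathfrak{s}(G/H)$, let $S = (g_1,\dots,g_N)$ be any sequence in $G$, and write $\pi\colon G\to G/H$ for the projection.

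First I would greedily produce disjoint subsets $I_1,\dots,I_{\mathfrak{s}(H)} \subset \{1,\dots,N\}$, each of size exactly $\exp(G/H)$, with the property that $\sum_{i\in I_j}\pi(g_i) = 0$ in $G/H$. At step $j$, after removing $I_1,\dots,I_{j-1}$, the number of remaining indices is $N - (j-1)\exp(G/H)$, which is at least $\mathfrak{s}(G/H)$ by the definition of $N$ as long as $j \leq \mathfrak{s}(H)$; hence the definition of $\mathfrak{s}(G/H)$ guarantees that a further zero-sum subsequence of length $\exp(G/H)$ exists in the quotient, and I can extract $I_j$. Define $h_j = \sum_{i\in I_j} g_i$; since $\pi(h_j)=0$, each $h_j$ lies in $H$.

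Next I would apply the definition of $\mathfrak{s}(H)$ to the sequence $(h_1,\dots,h_{\mathfrak{s}(H)})$ in $H$. This produces a subset $J\subset\{1,\dots,\mathfrak{s}(H)\}$ of size $\exp(H)$ with $\sum_{j\in J} h_j = 0$ in $H$. Taking the union $I = \bigcup_{j\in J} I_j$ gives a subset of $\{1,\dots,N\}$ of size $\exp(H)\cdot\exp(G/H) = \exp(G)$ (using the hypothesis) whose corresponding elements in $G$ sum to $\sum_{j\in J} h_j = 0$. This is exactly a zero-sum subsequence of $S$ of length $\exp(G)$, so $\mathfrak{s}(G) \leq N$.

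The only place where anything can fail is the greedy extraction step: I must be sure that $\mathfrak{s}(H)$ successive quotient-level zero-sums of length $\exp(G/H)$ can all be removed, which is precisely why $N$ is chosen as $\exp(G/H)(\mathfrak{s}(H)-1) + \mathfrak{s}(G/H)$ rather than anything smaller. No analytic input is needed -- the entire argument is bookkeeping with the definitions of $\mathfrak{s}$, $\exp$, and the short exact sequence $H\hookrightarrow G\twoheadrightarrow G/H$ -- and the hypothesis $\exp(G) = \exp(H)\exp(G/H)$ is used exactly once, to ensure the length of the final combined subsequence is $\exp(G)$.
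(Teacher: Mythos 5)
Your proof is correct: the greedy extraction of $\mathfrak{s}(H)$ disjoint zero-sum blocks of length $\exp(G/H)$ in the quotient, followed by an application of $\mathfrak{s}(H)$ to the block sums $h_j\in H$, is exactly the standard argument, and the counting $N-(\mathfrak{s}(H)-1)\exp(G/H)=\mathfrak{s}(G/H)$ justifies every extraction step. The paper itself gives no proof of this lemma (it is quoted from \cite{ChiDingGaoGeroldingerSchmid2005ZeroSumSubsequences} and \cite{FoxSauermann2017EGZ}), and your argument is essentially the one in those references, so there is nothing to add.
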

By repeated application of this lemma, Fox and Sauermann gave the
following two lemmas:
\begin{lem}
\label{lem:g_reduced}(\cite[Lemma 3.4]{FoxSauermann2017EGZ}) Let
$G$ be a non-trivial finite abelian group, and suppose that $G\cong G_{1}\times\cdots\times G_{m}$
where the exponents $\exp\left(G_{i}\right)$ are pairwise relatively
prime. Then
\[
\mathfrak{s}\left(G\right)\leq\sum_{i=1}^{m}\exp\left(G_{1}\right)\dotsm\exp\left(G_{i-1}\right)\mathfrak{s}(G_{i}).
\]
\end{lem}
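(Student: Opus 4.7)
The plan is to prove this by induction on $m$, applying Lemma~\ref{lem:EGZ_Lemma} at each step to peel off the factor $G_1$. The base case $m=1$ is the tautology $\mathfrak{s}(G_1) \leq \mathfrak{s}(G_1)$, so all the content is in the inductive step.

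For the inductive step, I would set $H = \{0\}\times G_{2}\times\cdots\times G_{m}$, viewed as a subgroup of $G \cong G_1\times\cdots\times G_m$, so that $G/H \cong G_1$. The first thing to check is that this choice satisfies the hypothesis of Lemma~\ref{lem:EGZ_Lemma}, namely $\exp(G) = \exp(H)\exp(G/H)$. Since the exponents $\exp(G_i)$ are pairwise coprime, the exponent of a direct product equals the product of the exponents, giving $\exp(H) = \exp(G_2)\cdots\exp(G_m)$ and $\exp(G) = \exp(G_1)\cdots\exp(G_m) = \exp(G/H)\exp(H)$, as required.

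Applying Lemma~\ref{lem:EGZ_Lemma} then yields
\[
\mathfrak{s}(G) \;\leq\; \exp(G_1)\bigl(\mathfrak{s}(H)-1\bigr) + \mathfrak{s}(G_1) \;\leq\; \exp(G_1)\mathfrak{s}(H) + \mathfrak{s}(G_1),
\]
using the trivial inequality $\mathfrak{s}(G_1) - \exp(G_1) \leq \mathfrak{s}(G_1)$. Since $H \cong G_2\times\cdots\times G_m$ is a product of $m-1$ groups of pairwise coprime exponents, the induction hypothesis gives
\[
\mathfrak{s}(H) \;\leq\; \sum_{j=1}^{m-1}\exp(G_2)\cdots\exp(G_j)\,\mathfrak{s}(G_{j+1}).
\]
Multiplying through by $\exp(G_1)$ and reindexing $i = j+1$ produces exactly $\sum_{i=2}^{m}\exp(G_1)\cdots\exp(G_{i-1})\mathfrak{s}(G_i)$, and adding the leftover $\mathfrak{s}(G_1)$ term (which is the $i=1$ summand, the empty product) completes the sum from $i=1$ to $m$.

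There is no real obstacle here; the only subtlety is keeping track of indices and verifying the exponent identity $\exp(G) = \exp(H)\exp(G/H)$, which is where the pairwise coprimality hypothesis is essential. The slight slack from dropping $-\exp(G_1)$ after invoking Lemma~\ref{lem:EGZ_Lemma} is precisely what absorbs the $+\mathfrak{s}(G_1)$ term at the end of the induction and yields the clean sum in the stated form.
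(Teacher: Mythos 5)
Your proof is correct and is exactly the argument the paper intends: the paper does not prove this lemma itself but cites \cite[Lemma 3.4]{FoxSauermann2017EGZ}, noting it follows ``by repeated application'' of Lemma \ref{lem:EGZ_Lemma}, which is precisely your induction peeling off one factor at a time (with the exponent condition $\exp(G)=\exp(H)\exp(G/H)$ verified via pairwise coprimality). No issues.
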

\begin{lem}
\label{lem:k_reduced_to_p}(\cite[Lemma 3.5]{FoxSauermann2017EGZ})
Let $k=p_{1}^{\alpha_{1}}\cdots p_{r}^{\alpha_{r}}$. Then we have
that 
\[
\mathfrak{s}\left((\mathbb{Z}/k\mathbb{Z})^{n}\right)<k\cdot\left(\frac{\mathfrak{s}\left(\mathbb{F}_{p_{1}}^{n}\right)}{p_{1}-1}+\cdots+\frac{\mathfrak{s}\left(\mathbb{F}_{p_{r}}^{n}\right)}{p_{r}-1}\right).
\]
\end{lem}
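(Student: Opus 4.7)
The plan is to decompose $(\mathbb{Z}/k\mathbb{Z})^n$ into its prime-power components and apply the two preceding lemmas in succession. By the Chinese Remainder Theorem,
\[
(\mathbb{Z}/k\mathbb{Z})^n \cong (\mathbb{Z}/p_1^{\alpha_1}\mathbb{Z})^n \times \cdots \times (\mathbb{Z}/p_r^{\alpha_r}\mathbb{Z})^n,
\]
and since the exponents $p_i^{\alpha_i}$ are pairwise coprime, Lemma \ref{lem:g_reduced} applies directly and yields
\[
\mathfrak{s}((\mathbb{Z}/k\mathbb{Z})^n) \leq \sum_{i=1}^{r} p_1^{\alpha_1} \cdots p_{i-1}^{\alpha_{i-1}}\, \mathfrak{s}((\mathbb{Z}/p_i^{\alpha_i}\mathbb{Z})^n).
\]
It remains to control each $\mathfrak{s}((\mathbb{Z}/p^{\alpha}\mathbb{Z})^n)$ in terms of $\mathfrak{s}(\mathbb{F}_p^n)$.

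For this prime-power step I would apply Lemma \ref{lem:EGZ_Lemma} iteratively. Take $G = (\mathbb{Z}/p^{\alpha}\mathbb{Z})^n$ and the subgroup $H = p \cdot G \cong (\mathbb{Z}/p^{\alpha-1}\mathbb{Z})^n$, so that $G/H \cong \mathbb{F}_p^n$ and $\exp(G) = p^{\alpha} = p^{\alpha-1} \cdot p = \exp(H)\exp(G/H)$ as required. The lemma then gives the recursion
\[
\mathfrak{s}((\mathbb{Z}/p^{\alpha}\mathbb{Z})^n) \leq p\bigl(\mathfrak{s}((\mathbb{Z}/p^{\alpha-1}\mathbb{Z})^n) - 1\bigr) + \mathfrak{s}(\mathbb{F}_p^n),
\]
which, unwound down to the base case $\mathfrak{s}((\mathbb{Z}/p\mathbb{Z})^n) = \mathfrak{s}(\mathbb{F}_p^n)$, collapses into a geometric sum of the form $\mathfrak{s}(\mathbb{F}_p^n) \cdot \frac{p^{\alpha}-1}{p-1}$ minus a strictly positive correction coming from the $-p$ terms. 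In particular, this is strictly bounded above by $\frac{p^{\alpha}\,\mathfrak{s}(\mathbb{F}_p^n)}{p-1}$.

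Plugging the prime-power bound back into the decomposition above and using the obvious estimate $p_1^{\alpha_1}\cdots p_i^{\alpha_i} \leq k$ gives
\[
\mathfrak{s}((\mathbb{Z}/k\mathbb{Z})^n) < \sum_{i=1}^{r} \frac{p_1^{\alpha_1}\cdots p_i^{\alpha_i}}{p_i-1}\,\mathfrak{s}(\mathbb{F}_{p_i}^n) \leq k \sum_{i=1}^{r} \frac{\mathfrak{s}(\mathbb{F}_{p_i}^n)}{p_i-1},
\]
which is precisely the claimed inequality. There is no real conceptual obstacle once the two previous lemmas are in hand; the only care needed is arithmetic bookkeeping, namely verifying that the strict inequality produced by the geometric-series bound in the prime-power step survives the outer summation. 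Since at least one term in the outer sum is strict, the overall inequality is strict, as required.
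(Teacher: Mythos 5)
Your argument is correct and is essentially the derivation the paper has in mind: the paper does not prove this lemma itself but cites Fox--Sauermann and notes it follows by repeated application of Lemma \ref{lem:EGZ_Lemma}, which is exactly what you do, combining Lemma \ref{lem:g_reduced} for the coprime decomposition with the iterated quotient $H=pG$ for the prime-power case. The unrolled recursion indeed gives $\mathfrak{s}\left(\left(\mathbb{Z}/p^{\alpha}\mathbb{Z}\right)^{n}\right)<\frac{p^{\alpha}}{p-1}\mathfrak{s}\left(\mathbb{F}_{p}^{n}\right)$ (strictly even when $\alpha=1$, since $\frac{p^{\alpha}-1}{p-1}<\frac{p^{\alpha}}{p-1}$), so the strictness of the final inequality is secure.
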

Lemma \ref{lem:k_reduced_to_p} combined with Theorem \ref{thm:Main_Theorem}
implies that 
\begin{align*}
\mathfrak{s}\left((\mathbb{Z}/k\mathbb{Z})^{n}\right) & <k\cdot\left(2^{p_{1}}(J(p_{1})p_{1})^{n}+\cdots+2^{p_{r}}(J(p_{r})p_{r})^{n}\right)\\
 & \leq kr2^{p}(J(p)p)^{n}\\
 & =(J(p)p+o(1))^{n}
\end{align*}
where $p=\max_{i}p_{i}$, and so Corollary \ref{cor: cp_asymp} follows.
To deduce Theorem \ref{thm:prop_d_for_general_k} from Theorem \ref{thm:property D Theorem},
write 
\[
\left(\mathbb{Z}/k\mathbb{Z}\right)^{n}=\left(\mathbb{Z}/q_{1}\mathbb{Z}\right)^{n}\times\cdots\times\left(\mathbb{Z}/q_{m}\mathbb{Z}\right)^{n}
\]
where $q_{1}\leq\cdots\leq q_{m}$ are pairwise relatively prime prime
powers dividing $k$. By Lemma \ref{lem:g_reduced} and Theorem \ref{thm:property D Theorem},
we have that 
\[
\mathfrak{s}\left((\mathbb{Z}/k\mathbb{Z})^{n}\right)\leq k\cdot\frac{(q_{m}-1)4^{n}+1}{q_{m}}+k\cdot\frac{(q_{m-1}-1)4^{n}+1}{q_{m}q_{m-1}}+\cdots+k\cdot\frac{(q_{1}-1)4^{n}+1}{q_{m}\cdots q_{1}}.
\]
The series telescopes, and we obtain
\[
\mathfrak{s}\left((\mathbb{Z}/k\mathbb{Z})^{n}\right)\leq(k-1)4^{n}+k\left(\frac{1}{q_{m}}+\frac{1}{q_{m}q_{m-1}}+\cdots+\frac{1}{q_{m}\cdots q_{1}}\right).
\]
 Since $q_{i}\geq2$, the sum $\frac{1}{q_{m}}+\frac{1}{q_{m}q_{m-1}}+\cdots+\frac{1}{q_{m}\cdots q_{1}}$
will be at most $1$, and Theorem \ref{thm:prop_d_for_general_k}
follows.

\section{\label{sec:Partition}The Partition Rank}

To motivate the definition of the slice rank and the partition rank
we begin by recalling the definition of the tensor rank. For a field $\mathbb{F}$ and finite
sets $X_{1},\dots,X_{n}$, a non-zero function 
\[
h:X_{1}\times\cdots\times X_{k}\rightarrow\mathbb{F},
\]
is called a \emph{rank 1 function} if there exists $f_{1},\dots,f_{k}$
such that 
\[
h(x_{1},\dots,x_{k})=f_{1}(x_{1})f_{2}(x_{2})\cdots f_{k}(x_{k}),
\]
where for each $i$ 
\[
f_i:X_i \rightarrow \mathbb{F}.
\]
The tensor rank of a function 
\[
F:X_{1}\times\cdots\times X_{k}\rightarrow\mathbb{F}
\]
 is defined to be the minimal $r$ such that 
\[
F=\sum_{i=1}^{r}g_{i}
\]
where the $g_{i}$ are rank 1 functions. This function $F$ gives rise to a tensor when thought of as indexing a multidimensional array over $\mathbb{F}$. Specifically $F$ gives rise to a $|X_{1}|\times\cdots\times|X_{k}|$ array of elements
of $\mathbb{F}$, and the rank $1$ functions correspond to tensors given by the outer
product of $k$ vectors. To define the partition rank, we first need
some notation. Given variables $x_{1},\dots,x_{k}$, and a set $S\subset\{1,\dots,k\}$,
$S=\left\{ s_{1},\dots,s_{m}\right\} $, let $\vec{x}_{S}$ denote
the $m$-tuple 
\[
x_{s_{1}},\dots,x_{s_{m}},
\]
so that for a function $g$ of $m$ variables, we have 
\[
g(\vec{x}_{S})=g(x_{s_{1}},\dots,x_{s_{m}}).
\]
For example, if $k=5$, and $S=\{1,2,4\}$, then $g\left(\vec{x}_{S}\right)=g(x_{1},x_{2},x_{4})$,
and $f\left(\vec{x}_{\{1,\dots,k\}\backslash S}\right)=f(x_{3},x_{5})$.
A partition of $\{1,2,\dots,k\}$ is a collection $P$ of non-empty
pairwise disjoint subsets of $\{1,\dots,k\}$ such that 
\[
\bigcup_{A\in P}A=\{1,\dots,k\}.
\]
We say that $P$ is the \emph{trivial} partition if it consists only
of a single set, $\{1,\dots,k\}$.
\begin{defn} \cite{Naslund2017Partition,TaosBlogCapsets}
Let $X_{1},\dots,X_{k}$ be finite sets, and let 
\[
h:X_{1}\times\cdots\times X_{k}\rightarrow\mathbb{F}.
\]
We say that $h$ has \emph{partition rank} $1$ if there exists some
non-trivial partition $P$ of the variables $\{1,\dots,k\}$ such
that 
\[
h(x_{1},\dots,x_{k})=\prod_{A\in P}f_{A}\left(\vec{x}_{A}\right)
\]
for some functions $f_{A}$. We say that $h$ has \emph{slice rank}
$1$, if in addition one of the sets $A\in P$ is a singleton.
\end{defn}
The function $h:X_{1}\times\cdots\times X_{k}\rightarrow\mathbb{F}$
will have partition rank $1$ if and only if it can be written in
the form 
\[
h(x_{1},\dots,x_{k})=f(\vec{x}_{S})g(\vec{x}_{T})
\]
for some $f,g$ and some $S,T\neq\emptyset$ with $S\cup T=\{1,\dots,k\}$.
Additionally, $h$ will have slice rank $1$ if it can be written
in the above form with either $|S|=1$ or $|T|=1$. In other words,
a function $h$ has partition rank $1$ if the tensor can be written
as a non-trivial outer product, and it has slice rank $1$ if it can
be written as the outer product between a vector and a $k-1$ dimensional
tensor.
\begin{defn}
Let $X_{1},\dots,X_{k}$ be finite sets. The \emph{partition rank}
of
\[
F:X_{1}\times\cdots\times X_{k}\rightarrow\mathbb{F},
\]
is defined to be the minimal $r$ such that 
\[
F=\sum_{i=1}^{r}g_{i}
\]
where the $g_{i}$ have partition rank $1$. The \emph{slice rank}
of $F$ is the minimal $r$ such that 
\[
F=\sum_{i=1}^{r}g_{i}
\]
where the $g_{i}$ have slice rank $1$.
\end{defn}
The slice rank and the usual tensor rank are ranks that arise from different partitions of the $k$-variables. The partition rank is the minimal rank among all possible ranks obtained
from partitioning the variables. The slice rank can be viewed as the
rank which results from the partitions of $\{1,\dots,k\}$ into a
set of size $1$ and a set of size $k-1$, and so we have that 
\[
\prank\leq\slicerank.
\]
For two variables, the slice rank, partition rank, and tensor rank
are equivalent since there is only one non-trivial partition of a
set of size $2$. For three variables, the partition rank and the
slice rank are equivalent, and for $4$ or more variables, all three
ranks are different. A key property of the partition rank is the following
Lemma, given in \cite[Lemma 11]{Naslund2017Partition}, which generalizes
\cite[Lemma 1]{TaosBlogCapsets}.
\begin{lem}
\label{lem:Critical_Lemma}Let $X$ be a finite set, and let $X^{k}$
denote the $k$-fold Cartesian product of $X$ with itself. Suppose
that 
\[
F:X^{k}\rightarrow\mathbb{F}
\]
corresponds to the diagonal identity tensor, that is 
\[
F(x_{1},\dots,x_{k})=\delta(x_{1},\dots,x_{k})=\begin{cases}
1 & x_{1}=\cdots=x_{k}\\
0 & \text{otherwise}
\end{cases}.
\]
Then 
\[
\prank(F)=|X|.
\]
\end{lem}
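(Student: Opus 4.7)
The upper bound $\prank(F) \leq |X|$ is immediate from the decomposition $F = \sum_{a \in X} e_a(x_1) \cdots e_a(x_k)$, where $e_a : X \to \mathbb{F}$ is the indicator function of $\{a\}$. For the lower bound, I propose to prove the stronger statement that for every $c : X \to \mathbb{F}$ the weighted diagonal tensor $D_c^{(k)}$, defined by $D_c^{(k)}(x_1,\ldots,x_k) = c(x_1)$ when $x_1 = \cdots = x_k$ and $0$ otherwise, satisfies $\prank(D_c^{(k)}) \geq |\mathrm{supp}(c)|$. The lemma follows by taking $c \equiv 1$. The base case $k = 2$ holds because the partition rank of a two-variable tensor equals the usual matrix rank, and $D_c^{(2)}$ is a diagonal matrix with diagonal entries $c(a)$.

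For the inductive step, suppose $D_c^{(k)} = \sum_{i=1}^{r} g_i$ with each $g_i$ of partition rank $1$. Every such piece factors non-trivially as $g_i = u_i(\vec{x}_{S_i}) v_i(\vec{x}_{T_i})$ across some bipartition $\{1,\ldots,k\} = S_i \sqcup T_i$, and swapping the two factors if necessary we may assume $k \in T_i$. Split the indices into Case A, those with $T_i = \{k\}$ (so the piece has the slice-rank-$1$ form $u_i(\vec{x}_{\{1,\dots,k-1\}}) v_i(x_k)$ with $v_i$ a function of the single variable $x_k$), and Case B, those with $|T_i| \geq 2$; write $r_A$ and $r_B$ for the respective counts. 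The key tool is the contraction operator $L_{\tilde c} H(x_1, \ldots, x_{k-1}) := \sum_{x_k \in X} \tilde c(x_k)\, H(x_1, \ldots, x_k)$ against a test function $\tilde c : X \to \mathbb{F}$. A direct computation yields three facts: $L_{\tilde c} D_c^{(k)} = D_{c \tilde c}^{(k-1)}$; for $i \in A$, $L_{\tilde c} g_i = \langle \tilde c, v_i \rangle\, u_i(\vec{x}_{\{1,\dots,k-1\}})$, which vanishes whenever $\tilde c$ lies in the annihilator $W^\perp$ of $W := \mathrm{span}\{v_i : i \in A\}$ (a subspace of dimension at most $r_A$); and for $i \in B$, $L_{\tilde c} g_i$ remains partition rank $1$ on the remaining $k-1$ variables, since $T_i \setminus \{k\}$ is non-empty.

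Writing $S := \mathrm{supp}(c)$, I would choose $\tilde c \in W^\perp$ with $\mathrm{supp}(\tilde c) \subset S$; the subspace of such $\tilde c$ has dimension at least $|S| - r_A$, and by the elementary fact that every $d$-dimensional subspace of the function space on $X$ contains a vector of support at least $d$ (projecting onto $d$ linearly independent coordinates), one can produce $\tilde c$ with $|\mathrm{supp}(\tilde c)| \geq |S| - r_A$. Since $\tilde c$ is supported in $S$, we have $\mathrm{supp}(c \tilde c) = \mathrm{supp}(\tilde c)$. Applying $L_{\tilde c}$ to both sides of the decomposition kills the Case A contribution and expresses $D_{c \tilde c}^{(k-1)}$ as a sum of at most $r_B$ partition rank $1$ pieces; the inductive hypothesis then gives $|S| - r_A \leq r - r_A$, whence $|S| \leq r$ (the degenerate case $|S| \leq r_A$ being trivial). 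The main subtlety, and what distinguishes this from Tao's slice-rank argument, is that Case A pieces do not remain partition rank $1$ under the contraction $L_{\tilde c}$; they collapse into arbitrary functions of $x_1, \ldots, x_{k-1}$, and so must be eliminated separately through the annihilator condition $\tilde c \in W^\perp$ before the induction can be applied.
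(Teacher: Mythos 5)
Your proof is correct. Note that the paper does not actually prove this lemma internally---its ``proof'' is a citation to \cite[Lemma 11]{Naslund2017Partition}---so what you have written is a self-contained argument filling in what the paper outsources, and it follows the same general strategy as that reference (an induction generalizing the Tao--Sawin slice-rank argument for diagonal tensors). The steps all check out: a partition-rank-one function can always be coarsened to a product across a non-trivial bipartition, so your two-factor normal form $u_i(\vec{x}_{S_i})v_i(\vec{x}_{T_i})$ with $k\in T_i$ is legitimate; the contraction identities $L_{\tilde c}D^{(k)}_c=D^{(k-1)}_{c\tilde c}$, the vanishing of Case A terms for $\tilde c$ in the annihilator of $\mathrm{span}\{v_i\}$, and the fact that Case B terms stay partition rank at most one (because $T_i\setminus\{k\}\neq\emptyset$ and $S_i\neq\emptyset$ bipartition $\{1,\dots,k-1\}$) are all correct; the dimension count $\dim\left(W^{\perp}\cap\{\tilde c:\mathrm{supp}(\tilde c)\subset S\}\right)\geq|S|-r_A$ is right; and the elementary fact that a $d$-dimensional subspace of $\mathbb{F}^{X}$ contains a vector of support at least $d$ holds over any field (pick $d$ linearly independent columns of a basis matrix and solve for a vector projecting to $(1,\dots,1)$ on those coordinates). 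Strengthening the statement to weighted diagonals $D^{(k)}_c$ is exactly the right induction hypothesis, since the contraction perturbs the weight, and your observation that the Case A pieces must be killed by the annihilator condition (rather than surviving as rank-one pieces) is precisely the point where the partition-rank argument departs from the naive slice-rank induction.
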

\begin{proof}
See \cite[Lemma 11]{Naslund2017Partition}.
\end{proof}

\section{\label{sec:Unconditional}Unconditional Bounds for $\mathfrak{s}\left(\mathbb{F}_{p}^{n}\right)$}

We begin with a Lemma that bounds from above the number of monomials
of degree at most $d$ over $\mathbb{F}_{q}^{n}$. This is a standard
application of Markov's inequality.
\begin{lem}
\label{lem:number_of_monomials} Let $q$ be a prime power. As functions\footnote{When viewed as distinct functions, as opposed to abstract polynomials, the identity $x^q-x=0$ implies that the degree of each $x_j$ in each monomial is at most $q-1$.}, the number of monomials of degree
at most $d$ over $\mathbb{F}_{q}^{n}$ equals 
\begin{equation}
\#\left\{ v\in\{0,\dots,q-1\}^{n}:\ \sum_{i=1}^{n}v_{i}\leq d\right\} ,\label{eq:number_monomials}
\end{equation}
and this is at most
\[
\min_{0<x<1}\left(\frac{1-x^{q}}{1-x}x^{-\frac{d}{n}}\right)^{n}.
\]
\end{lem}
\begin{proof}
Let $X_{1},\dots,X_{n}$ denote independent uniform random variables
on $\{0,\dots,q-1\}$, and let $X=\sum_{i=1}^{n}X_{i}$. Then (\ref{eq:number_monomials})
equals
\[
q^{n}\mathbb{P}\left(X\leq d\right),
\]
and by Markov's inequality, for any $0<x<1$
\begin{align*}
q^{n}\mathbb{P}\left(X\leq d\right) & =q^{n}\mathbb{P}\left(x^{d}\leq x^{X}\right)\\
 & \leq q^{n}\mathbb{E}\left(x^{X}\right)x^{-d}\\
 & =q^{n}\left(\mathbb{E}\left(x^{X_{1}}\right)\right)^{n}x^{-d}\\
 & =\left(\frac{1-x^{q}}{1-x}x^{-\frac{d}{n}}\right)^{n}.
\end{align*}
\end{proof}
Let $p>2$ be a prime. Define
\[
F_{p}:\left(\mathbb{F}_{p}^{n}\right)^{p}\rightarrow\mathbb{F}_{p}
\]
by
\begin{equation}
F_{p}(x_{1},\dots,x_{p})=\prod_{i=1}^{n}\left(1-\left(x_{1i}+x_{2i}+\cdots+x_{pi}\right)^{p-1}\right),\label{eq:F_p definition}
\end{equation}
where $x_{ji}$ is understood to be the $i^{th}$ coordinate of the
vector $x_{j}\in\mathbb{F}_{p}^{n}$, and the notation $\left(\mathbb{F}_{p}^{n}\right)^{p}$
is used to denote the $p$-fold Cartesian product of $\mathbb{F}_{p}^{n}$
with itself. Then 
\[
F_{p}\left(x_{1},\dots,x_{p}\right)=\begin{cases}
1 & \text{if }x_{1}+x_{2}+\cdots+x_{p}=0\\
0 & \text{otherwise}
\end{cases}.
\]
This tensor does not take into account whether or not the variables
are distinct, and so in particular for any $x\in\mathbb{F}_{p}^{n}$
\[
F_{p}(x,\dots,x)=1.
\]
In order to modify this tensor so that it picks up only distinct $p$-tuples
of elements summing to zero, as well as the diagonal elements, we use the technique in \cite{Naslund2017Partition},
and introduce an indicator which is a sum over the permutations in
the symmetric group $S_{p}$. More generally, let $X$ be a finite set, and let $k\geq3$ be an integer. For every $\sigma\in S_{k}$, define
\begin{equation}
f_{\sigma}:X^k\rightarrow\mathbb{F}\label{eq:f_sigma def}
\end{equation}
to be the function that is $1$ if $(x_{1},\dots,x_{p})$ is a fixed
point of $\sigma$, and $0$ otherwise. The following is \cite[Lemma 14]{Naslund2017Partition}:
\begin{lem}
\label{lem:Group-action-lemma}We have the identity
\[
\sum_{\sigma\in S_{k}}\sgn(\sigma)f_{\sigma}(x_{1},\dots,x_{k})=\begin{cases}
1 & \text{if }x_{1},\dots,x_{k}\text{ are distinct}\\
0 & \text{otherwise}
\end{cases},
\]
where $\sgn(\sigma)$ is the sign of the permutation\@.
\end{lem}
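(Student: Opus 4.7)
The plan is to identify the sum on the left-hand side as the signed count of the stabilizer of the tuple $(x_1,\dots,x_k)$ inside the symmetric group $S_k$, acting on $X^k$ by permuting coordinates. The key observation is that $f_\sigma(x_1,\dots,x_k)=1$ exactly when $x_{\sigma(i)}=x_i$ for every $i$, i.e. when $\sigma$ lies in the stabilizer $H=\mathrm{Stab}(x_1,\dots,x_k)$. Thus the sum reduces to $\sum_{\sigma\in H}\sgn(\sigma)$, and the problem becomes purely group-theoretic.

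First I would dispose of the easy case: if the $x_i$ are pairwise distinct, then $H$ is the trivial subgroup $\{e\}$, so the sum equals $\sgn(e)=1$. This gives the first branch of the piecewise definition.

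Next I would treat the case where the $x_i$ are not all distinct. Grouping the indices by value, if the distinct values occurring among $x_1,\dots,x_k$ have multiplicities $k_1,\dots,k_m$, then $H$ is isomorphic to the Young subgroup $S_{k_1}\times\cdots\times S_{k_m}$. Since the $x_i$ are not distinct, some $k_j\ge 2$, and hence $H$ contains a transposition $\tau$, which is an odd permutation. The restriction $\sgn\colon H\to\{\pm1\}$ is then a surjective homomorphism, so its kernel has index $2$ in $H$ and the cosets contribute with opposite signs. Therefore
\[
\sum_{\sigma\in H}\sgn(\sigma)=0,
\]
which gives the second branch. Combining the two cases yields the claim.

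No step looks to be a genuine obstacle: the only point that requires a moment of care is making sure the stabilizer really contains an odd element whenever some multiplicity is at least $2$, which is immediate from the presence of a transposition swapping two coincident coordinates. The rest is just the standard fact that a nontrivial sign character on a finite group sums to zero.
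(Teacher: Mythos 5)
Your proposal is correct and follows essentially the same route as the paper: rewrite the sum as $\sum_{\sigma\in\mathrm{Stab}(\vec{x})}\sgn(\sigma)$, note the stabilizer is a product of symmetric groups, and observe that the signed sum vanishes unless the stabilizer is trivial, in which case it equals $1$. Your explicit use of a transposition to show the sign character is nontrivial on the stabilizer just fills in the detail the paper leaves implicit.
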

\begin{proof}
By definition, 
\[
\sum_{\sigma\in S_{k}}\sgn(\sigma)f_{\sigma}(x_{1},\dots,x_{k})=\sum_{\sigma\in\text{Stab}(\vec{x})}\sgn(\sigma)
\]
where $\text{Stab}(\vec{x})\subset S_{k}$ is the stabilizer of $\vec{x}$.
Since the stabilizer is a product of symmetric groups, this will be
non-zero precisely when $\text{Stab}(\vec{x})$ is trivial, and hence
$x_{1},\dots,x_{k}$ must be distinct. This vector is then fixed only
by the identity element, and so the sum equals $1$.
\end{proof}
Using this Lemma, we give the following modification of \cite[Lemma 15]{Naslund2017Partition}:
\begin{lem}
\label{lem:critical-indicator-function}Let $\mathcal{C}_{i}\subset S_{k}$
denote the set all elements in $S_{k}$ which are the product of exactly
$i$ disjoint cycles. Define 
\[
R_{k}(x_{1},\dots,x_{k})=\sum_{\substack{
\sigma\in S_{k}\\
\sigma\notin \mathcal{C}_{1},\mathcal{C}_{2}
}}\sgn(\sigma)f_{\sigma}(x_{1},\dots,x_{k}).
\]
Then for $k\geq3$,
\[
R_{k}(x_{1},\dots,x_{k})=\begin{cases}
1 & \text{if }x_{1},\dots,x_{k}\text{ are distinct}\\
(-1)^{k-1}(k-1)!\sum_{j=2}^{k-1}\frac{1}{j} & \text{if }x_{1}=\cdots=x_{k}\\
\alpha(x_{1},\dots,x_{k}) & \text{if }x_{1},\dots,x_{k}\text{ take on }2\text{ distinct values}\\
0 & \text{otherwise}
\end{cases},
\]
where 
\[
\alpha(x_{1},\dots,x_{k})=(-1)^{k-1}\#\left\{ \sigma\in\mathcal{C}_{2}\text{ that fix }(x_{1},\dots,x_{k})\right\} .
\]
\end{lem}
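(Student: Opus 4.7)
The plan is to use Lemma \ref{lem:Group-action-lemma} to reduce $R_k$ to three sub-sums and then do a case analysis on the ``shape'' of $\vec{x}=(x_1,\dots,x_k)$, i.e.\ the multiset of distinct values it takes. Writing $\mathbf{1}[\vec{x}\ \text{distinct}]=\sum_{\sigma\in S_k}\sgn(\sigma)f_\sigma(\vec{x})$ and subtracting off the two omitted conjugacy classes, we get
\[
R_k(\vec{x})=\mathbf{1}[\vec{x}\ \text{distinct}]-\sum_{\sigma\in\mathcal{C}_1}\sgn(\sigma)f_\sigma(\vec{x})-\sum_{\sigma\in\mathcal{C}_2}\sgn(\sigma)f_\sigma(\vec{x}).
\]
The key structural observation is that $f_\sigma(\vec{x})=1$ iff $\sigma\in\operatorname{Stab}(\vec{x})$, and that if $\vec{x}$ has value multiplicities $a_1,\dots,a_m$ then $\operatorname{Stab}(\vec{x})\cong S_{a_1}\times\cdots\times S_{a_m}$; moreover $\sigma\in\mathcal{C}_1$ (resp.\ $\mathcal{C}_2$) can only fix $\vec{x}$ if each cycle is monochromatic in the coloring by $\vec{x}$, so $\vec{x}$ takes at most $1$ (resp.\ $2$) distinct values.

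For the distinct case, the only element of $\operatorname{Stab}(\vec{x})$ is the identity, which lies in $\mathcal{C}_k$ and so contributes only to the first term, giving $R_k=1$. For the case $\vec{x}$ takes at least three distinct values, the last two observations force both remaining sums to vanish, yielding $R_k=0$. For the two-valued case with multiplicities $a+b=k$, no $k$-cycle can fix $\vec{x}$, while a $\sigma\in\mathcal{C}_2$ fixing $\vec{x}$ must have cycle type $(a,b)$ with the $a$-cycle supported on the first color class and the $b$-cycle on the second; there are exactly $(a-1)!(b-1)!$ such permutations, each of sign $(-1)^{a-1}(-1)^{b-1}=(-1)^k$. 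This gives $R_k=(-1)^{k-1}(a-1)!(b-1)!=(-1)^{k-1}\#\{\sigma\in\mathcal{C}_2:\sigma(\vec{x})=\vec{x}\}$, matching the stated $\alpha(\vec{x})$.

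The only case requiring a genuine computation is $x_1=\cdots=x_k$, where $\operatorname{Stab}(\vec{x})=S_k$ and all three sums in the displayed identity for $R_k$ are taken in full. Here $\mathbf{1}[\vec{x}\ \text{distinct}]=0$, the $k$-cycle sum equals $(-1)^{k-1}(k-1)!$ since there are $(k-1)!$ $k$-cycles each of sign $(-1)^{k-1}$, and the $\mathcal{C}_2$ sum equals $(-1)^k|\mathcal{C}_2|$ since every product of a $j$-cycle and a $(k-j)$-cycle has sign $(-1)^{k-2}=(-1)^k$. The main technical input is the unsigned Stirling number identity $|\mathcal{C}_2|=c(k,2)=(k-1)!\sum_{j=1}^{k-1}\tfrac{1}{j}$, which I would prove in one line by induction using the recurrence $c(k,2)=(k-1)c(k-1,2)+(k-1)!$. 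Substituting and simplifying,
\[
R_k=-(-1)^{k-1}(k-1)!-(-1)^k(k-1)!\sum_{j=1}^{k-1}\tfrac{1}{j}=(-1)^{k-1}(k-1)!\Bigl(-1+\sum_{j=1}^{k-1}\tfrac{1}{j}\Bigr)=(-1)^{k-1}(k-1)!\sum_{j=2}^{k-1}\tfrac{1}{j},
\]
as desired. No step is really an obstacle; the only place any care is needed is keeping track of signs when invoking the Stirling identity in the monochromatic case.
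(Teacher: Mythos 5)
Your proof is correct and follows essentially the same route as the paper: subtract the $\mathcal{C}_1$ and $\mathcal{C}_2$ contributions from the full signed stabilizer sum of Lemma \ref{lem:Group-action-lemma}, note that a fixing permutation must have monochromatic cycles (so $\mathcal{C}_1,\mathcal{C}_2$ only matter for at most one or two distinct values), and compute the all-equal case via $|\mathcal{C}_1|=(k-1)!$ and $|\mathcal{C}_2|=(k-1)!\sum_{j=1}^{k-1}\tfrac{1}{j}$. Your extra details (the $(a-1)!(b-1)!$ count and the Stirling recurrence) are correct refinements of steps the paper simply asserts.
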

\begin{proof}
If there are $3$ or more distinct elements among $x_{1},\dots,x_{k}$
then $f_{\sigma}(x_{1},\dots,x_{k})=0$ for any $\sigma\in\mathcal{C}_{1},\mathcal{C}_{2}$,
and so the identity holds by Lemma \ref{lem:Group-action-lemma}.
When there are exactly two distinct elements among $x_{1},\dots,x_{k}$,
$f_{\sigma}(x_{1},\dots,x_{k})=0$ for any $\sigma\in\mathcal{C}_{1}$,
and so 
\[
R_{k}(x_{1},\dots,x_{k})=(-1)^{k-1}\#\left\{ \sigma\in\mathcal{C}_{2}\text{ that fix }(x_{1},\dots,x_{k})\right\} 
\]
since $\sgn(\sigma)=(-1)^{k}$ for any $\sigma\in\mathcal{C}_{2}$.
Lastly, when $x_{1}=\cdots=x_{k}$, we have that 
\[
R_{k}(x_{1},\dots,x_{k})+(-1)^{k}|\mathcal{C}_{2}|+(-1)^{k-1}|\mathcal{C}_{1}|=0
\]
by Lemma \ref{lem:Group-action-lemma} since $\sgn\sigma=(-1)^{k-1}$
for $\sigma\in\mathcal{C}_{1}$. Since
\[
|\mathcal{C}_{1}|=(k-1)!\ \ \ \text{and}\ \ \ |\mathcal{C}_{2}|=(k-1)!\sum_{j=1}^{k-1}\frac{1}{j},
\]
it follows that 
\[
R_{k}(x_{1},\dots,x_{k})=(-1)^{k-1}\sum_{j=2}^{k-1}\frac{1}{j}
\]
when $x_{1}=\cdots=x_{k}$.
\end{proof}

Let 
\begin{equation}
\delta(x_{1},\dots,x_{k})=\begin{cases}
1 & x_{1}=\cdots=x_{k}\\
0 & \text{otherwise}
\end{cases},\label{eq:delta_def}
\end{equation}
and for a partition $P$ of $\{1,\dots,k\}$ define 
\[
\delta_{P}(x_{1},\dots,x_{k})=\prod_{A\in P}\delta(\vec{x}_{A})
\]
where if $A=\{j\}$ is a singleton, $\delta(\vec{x}_{A})$ is defined
to be the constant function $1$. Let $\sigma\in S_{k}$, and consider
its disjoint cycle decomposition. Let $A_{1},\dots,A_{m}$ denote
the sets of indices corresponding to the disjoint cycles. Then $P=\{A_{1},\dots,A_{m}\}$
is a partition of $\{1,\dots,k\}$ and 
\begin{equation}\label{eq:delta_prod}
f_{\sigma}(x_{1},\dots,x_{k})=\delta_{P}(x_{1},\dots,x_{k})=\prod_{i=1}^{m}\delta(\vec{x}_{A_{i}}).
\end{equation}
 It follows that we can write $R_{k}(x_{1},\dots,x_{k})$ exactly
as a linear combination of products of disjoint delta functions, including
the constant function. When $k=3$
\[
R_{3}(x_{1},x_{2},x_{3})=1,
\]
when $k=4$
\begin{align*}
R_{4}(x_{1},x_{2},x_{3},x_{4})= & 1-\delta(x_{1},x_{2})-\delta_{2}(x_{2},x_{3})-\delta(x_{3},x_{4})\\
 & -\delta(x_{4},x_{1})-\delta(x_{1},x_{3})-\delta(x_{2},x_{4}),
\end{align*}
and when $k=5$
\begin{align*}
R_{5}(x_{1},x_{2},x_{3},x_{4},x_{5})= & 1-\sum_{i<j\le5}\delta(x_{i},x_{j})+2\sum_{i<j<l\leq5}\delta(x_{i},x_{j},x_{l})\\
 & + \sum_{i<j\leq5}\delta(x_{i},x_{j})\sum_{\substack{
l<m\leq5\\
l,m\neq i,j
}}\delta(x_{l},x_{m}).
\end{align*}
\begin{rem}Starting with $k=5$, there will be terms that are the product of
multiple delta functions in multiple variables, and to handle these
we need to use the partition rank. For example, as a function on $X^{4}$,
\[
\delta(x_{1},x_{2})\delta(x_{3},x_{4})
\]
has slice rank equal to $|X|$ but partition rank equal to $1$ \cite{TaoSawinBlog}. 
\end{rem}

\begin{lem}
Let $X=\mathbb{F}_p^n$ and let $k=p$. Define 
\[
\tilde{R}_p:\left(\mathbb{F}_p^n\right)^p\rightarrow \mathbb{F}_p 
\]
by taking $R_p$ modulo $p$. Then
\[
\tilde{R}_{p}(x_{1},\dots,x_{p})=\begin{cases}
1 & \text{if }x_{1},\dots,x_{p}\text{ are distinct}\\
1 & \text{if }x_{1}=\cdots=x_{p}\\
\alpha(x_{1},\dots,x_{p}) & \text{if }x_{1},\dots,x_{k}\text{ take on }2\text{ distinct values}\\
0 & \text{otherwise}
\end{cases},
\]
where $\alpha$ was defined in Lemma \ref{lem:critical-indicator-function}.
\end{lem}
\begin{proof}
By pairing every element in $\mathbb{F}_{p}^{\times}$ with its additive inverse
we have that 
\[
\sum_{j=2}^{p-1}\frac{1}{j}=-1+\sum_{j=1}^{p-1}\frac{1}{j}=-1,
\]
and so by Wilson's Theorem
\[
(-1)^{p-1}(p-1)!\sum_{j=2}^{p-1}\frac{1}{j}=(-1)^{p-1}=1.
\]
Hence if $x_{1}=\cdots=x_{p}$ then $\tilde{R}_{p}(x_{1},\dots,x_{p})=1$, and the result follows by Lemma \ref{lem:critical-indicator-function}.
\end{proof}

We now use the function $\tilde{R}_{p}(x_{1},\dots,x_{p})$ to modify $F_{p}$,
and arrive at our desired indicator tensor.
\begin{lem}
\label{lem:I_p def}For $x_{1},\dots,x_{p}\in\mathbb{F}_{p}^{n}$
 define 
\[
I_{p}(x_{1},\dots,x_{p})=\tilde{R}_{p}(x_{1},\dots,x_{p})F_{p}(x_{1},\dots,x_{p})
\]
where $F_{p}$ is defined in (\ref{eq:F_p definition}) and $\tilde{R}_{p}$
is defined above. Then
\[
I_{p}(x_{1},\dots,x_{p})=\begin{cases}
1 & \text{if }x_{1},\dots,x_{p}\text{ are distinct and sum to zero}\\
1 & \text{if }x_{1}=\cdots=x_{p}\\
0 & \text{otherwise}
\end{cases}.
\]
\end{lem}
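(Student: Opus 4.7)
The plan is a direct case analysis on the combinatorial type of the tuple $(x_1,\dots,x_p)$, using Lemma \ref{lem:critical-indicator-function} to evaluate $R_p$ and the definition of $F_p$ from (\ref{eq:F_p definition}). Four cases arise: (i) the $x_i$ are pairwise distinct, (ii) all $x_i$ are equal, (iii) exactly two distinct values appear, (iv) three or more distinct values appear. Case (i) is immediate: $R_p = 1$, and $F_p$ equals $1$ or $0$ according to whether $x_1+\cdots+x_p = 0$, giving the desired value of $I_p$ in both subcases.

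The main point, and essentially the only nontrivial step, is Case (ii). If $x_1 = \cdots = x_p = x$, then the sum equals $px = 0$ in $\mathbb{F}_p^n$, so $F_p(x,\dots,x) = 1$, and I must verify that $R_p(x,\dots,x) = 1$ in $\mathbb{F}_p$. By Lemma \ref{lem:critical-indicator-function} this value is $(-1)^{p-1}(p-1)!\sum_{j=2}^{p-1} \frac{1}{j}$, which is well defined in $\mathbb{F}_p$ since each $j \in \{2,\dots,p-1\}$ is invertible. Because $p$ is odd, $(-1)^{p-1}=1$, so it suffices to show $(p-1)!\sum_{j=2}^{p-1}\frac{1}{j} \equiv 1 \pmod{p}$. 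Wilson's theorem gives $(p-1)! \equiv -1 \pmod{p}$, and since $j \mapsto j^{-1}$ permutes $\{1,\dots,p-1\}$ modulo $p$,
\[
\sum_{j=1}^{p-1} \frac{1}{j} \equiv \sum_{j=1}^{p-1} j = \frac{p(p-1)}{2} \equiv 0 \pmod{p},
\]
so $\sum_{j=2}^{p-1}\frac{1}{j} \equiv -1 \pmod{p}$, and consequently $(p-1)!\sum_{j=2}^{p-1}\frac{1}{j} \equiv (-1)(-1) = 1 \pmod{p}$, as required.

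Cases (iii) and (iv) are routine. In Case (iii), say values $a\neq b$ in $\mathbb{F}_p^n$ appear $k$ and $p-k$ times respectively, with $1 \leq k \leq p-1$; then $x_1+\cdots+x_p = ka + (p-k)b = k(a-b) \neq 0$ in $\mathbb{F}_p^n$, since $k$ is invertible modulo $p$ and $a-b \neq 0$. Hence $F_p = 0$ and $I_p = 0$, irrespective of the (generally nonzero) value $\alpha$ of $R_p$. In Case (iv), Lemma \ref{lem:critical-indicator-function} already gives $R_p = 0$, so $I_p = 0$. The only obstacle worth flagging is the mod-$p$ identity in Case (ii), which rests on Wilson's theorem together with the vanishing of the harmonic sum $\sum_{j=1}^{p-1} 1/j$ modulo odd primes; the rest is bookkeeping.
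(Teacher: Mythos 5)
Your proof is correct and follows essentially the same route as the paper: evaluate $R_p$ via Lemma \ref{lem:critical-indicator-function}, use Wilson's theorem together with the vanishing of $\sum_{j=1}^{p-1}1/j$ modulo $p$ (the inverse-permutation argument) to get $R_p=1$ on the diagonal, and observe that a tuple with exactly two distinct values has nonzero sum so $F_p$ kills it. The remaining cases are handled exactly as in the paper, so there is nothing to add.
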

\begin{proof}

If $x_1,\dots,x_p$ do not sum to zero, then $F_p(x_1,\dots,x_p)=0$. Assume that $x_1+\cdots+x_p=0$. If there are exactly two distinct elements among $x_{1},\dots,x_{p}$,
say $x,y\in\mathbb{F}_{p}^{n}$, then 
\[
x_{1}+\cdots+x_{p}=cx+(p-c)y=c(x-y)
\]
where $1\leq c\leq p-1$. Since $x\neq y$, this cannot equal $0$,
and so $F_{p}(x_{1},\dots,x_{p})=0$ in this case. When $x_{1},\dots,x_{p}$
are distinct and sum to zero, then $F_{p}(x_{1},\dots,x_{p})=1$ and
$R_{p}(x_{1},\dots,x_{p})=1$, and so the result follows. 
\end{proof}
We are now ready to prove Theorem \ref{thm:Main_Theorem}.
\begin{proof}
Suppose that $A\subset\mathbb{F}_{p}^{n}$ does not contain distinct
$x_{1},\dots,x_{p}$ such that 
\[
x_{1}+\cdots+x_{p}=0.
\]
Let $I_{p}$ be defined as in Lemma \ref{lem:I_p def}. When we restrict
$I_{p}$ to $A^{p}$, it will be the diagonal tensor with $1$'s on
the diagonal, and so by Lemma \ref{lem:Critical_Lemma} 
\[
|A|\leq\prank(I_{p}).
\]
 To bound the partition rank of $I_{p}(x_{1},\dots,x_{p})$, we split
$\tilde{R}_{p}$ into a sum of $f_{\sigma}$ terms, and examine each of the resulting terms
\begin{equation}
f_{\sigma}(x_{1},\dots,x_{p})F_{p}(x_{1},\dots,x_{p}).\label{eq:Breaking_up_I_p_equ}
\end{equation}
By equation (\ref{eq:delta_prod}), we may write
\[
f_{\sigma}(x_{1},\dots,x_{p})=\prod_{i=1}^{m}\delta\left(\vec{x}_{S_{i}}\right)
\]
where the $S_{i}$ are disjoint non-empty sets, and where we have
the convention $\delta(\vec{x}_{S})=1$ when $S$ is a singleton set. Since the definition of $\tilde{R}_p$ excluded $\sigma \in\mathcal{C}_1,\mathcal{C}_2$, every term appearing in our sum will have $\sigma\in\mathcal{C}_j$ for $j\geq 3$, and so $f_\sigma$ will be a product of at least three delta functions, and so we have $m\geq 3$.   
For each $1\leq i\leq m$, let $s_{i}$ be the minimal element of $S_{i}$. Then we have
the equality of functions
\[
f_{\sigma}(x_{1},\dots,x_{p})F_{p}(x_{1},\dots,x_{p})=\prod_{i=1}^{m}\delta\left(\vec{x}_{S_{i}}\right)\prod_{j=1}^{n}\left(1-\left(\sum_{i=1}^{m}|S_i|x_{s_{i}j}\right)^{p-1}\right).
\]
 Note that by the disjointness of
the sets $S_{1},\dots,S_{m}$, the variable $x_{s_{i}}$ will not
appear in any delta function $\delta\left(\vec{x}_{S_{j}}\right)$
for $j\neq i$. We may expand the polynomial above as a linear combination
of monomials of the form
\[
\left[\delta\left(\vec{x}_{S_{1}}\right)x_{s_{1}1}^{e_{11}}\cdots x_{s_{1}n}^{e_{1n}}\right]\left[\delta\left(\vec{x}_{S_{2}}\right)x_{s_{2}1}^{e_{21}}\cdots x_{s_{2}n}^{e_{2n}}\right]\cdots\left[\delta\left(\vec{x}_{S_{m}}\right)x_{s_{1}1}^{e_{m1}}\cdots x_{s_{m}n}^{e_{mn}}\right],
\]
where $e_{ij}\leq p-1$ and $\sum_{i=1}^{m}\sum_{j=1}^{n}e_{ij}\leq(p-1)n$.
For each term, there will be some coordinate $i$ such that the monomial
\[
x_{s_{i}1}^{e_{i1}}\cdots x_{s_{i}n}^{e_{in}}
\]
has degree at most $\frac{(p-1)n}{m}$. Futhermore, we must have $m\geq3$
since the permutations $\sigma$ with $1$ or $2$ cycles in their
disjoint cycle decomposition do not appear in $R_{k}$. Now, consider
the simultaneous expansion of all $f_{\sigma}(x_{1},\dots,x_{p})F_{p}(x_{1},\dots,x_{p})$
for each permutation $\sigma$, that is the complete expansion of
$I_{p}(x_{1},\dots,x_{p})$. By the above analysis, for each term
$v(x_{1},\dots,x_{p})$ appearing in the expansion, there exists a
set $S\subset\{1,2,\dots,p\}$, of size $|S|\leq p-2$, such that
for $s = \min S$,
\[
v(x_{1},\dots,x_{p})=\left[\delta\left(\vec{x}_{S}\right)x_{s1}^{e_{s1}}\cdots x_{sn}^{e_{sn}}\right]h\left(\vec{x}_{\{1,\dots,p\}\backslash S}\right)
\]
and 
\[
\sum_{i=1}^{n}e_{si}\leq\frac{(p-1)n}{3},
\]
where $h$ is some function. By grouping terms by delta function with
lowest degree monomial, and counting the number of monomials of degree
at most $\frac{n(p-1)}{3}$, we find that the partition rank of $I_{p}$
is at most
\begin{equation}
\sum_{\substack{
S\subset\{1,\dots,p\}\\
1\leq|S|\leq p-2
}}\#\left\{ v\in\{0,1,\dots,p-1\}^{n}:\ \sum_{i=1}^{n}v_{i}\le\frac{n(p-1)}{3}\right\} ,\label{eq:true_bound}
\end{equation}
and so by Lemma \ref{lem:number_of_monomials} we have the upper bound
\begin{align*}
|A| \leq & \prank(I_{p})  \\ \leq &\left(2^{p}-p-2\right)\left(\min_{0<x<1}\frac{1-x^{p}}{1-x}x^{-\frac{p-1}{3}}\right)^{n}\\
  = &(2^{p}-p-2)(pJ(p))^{n}.
\end{align*}
The bound for $\mathfrak{s}(\mathbb{F}_{p}^{n})$ follows from the
fact that any sequence can have at most $p-1$ copies of the same
element without trivially introducing a solution to 
\[
x_{1}+\cdots+x_{p}=0.
\]
\end{proof}

\section{\label{sec:Conditional}Conditional Bounds for $\mathfrak{s}\left(\left(\mathbb{Z}/q\mathbb{Z}\right)^{n}\right)$}

We first show that Theorem \ref{thm:property D Theorem} follows as a Corollary
of Theorem \ref{thm:q_el_not_distinct} and (\ref{eq:gamma_q_bound}).
\begin{proof}
(Theorem \ref{thm:q_el_not_distinct} implies Theorem \ref{thm:property D Theorem})
Let $S$ be a sequence in $\left(\mathbb{Z}/q\mathbb{Z}\right)^{n}$
of length $s\left(\left(\mathbb{Z}/q\mathbb{Z}\right)^{n}\right)-1$
which does not contain $q$ elements that sum to $0$. By property
$D$, it follows that as a multi-set
\[
S=\cup_{i=1}^{q-1}T
\]
for some $T\subset\left(\mathbb{Z}/q\mathbb{Z}\right)^{n}$. This
implies that the only solution to 
\[
x_{1}+\cdots+x_{q}=0
\]
for $x_{i}\in T$, with no distinctness requirement, occurs when $x_{1}=x_{2}=\cdots=x_{q}$.
From Theorem \ref{thm:q_el_not_distinct} we obtain $|T|\leq\gamma_{q}^{n},$
and hence $\mathfrak{s}\left(\left(\mathbb{Z}/q\mathbb{Z}\right)^{n}\right)\leq(q-1)4^{n}+1$.
\end{proof}
To prove Theorem \ref{thm:q_el_not_distinct} we use the slice rank
method. The following Lemma appears in Theorem 4.14 and Proposition 4.15 in \cite{BlasiakChurchCohnGrochowNaslundSawinUmans2016MatrixMultiplication}:
\begin{lem}
Let $q$ be a prime power. For any $x_{1},\dots,x_{k}\in\mathbb{Z}/q\mathbb{Z}$, 
modulo $p$ we have that
\[
\sum_{m_{1}+\cdots+m_{k}\leq q-1}(-1)^{m_{1}+\cdots+m_{k}}\binom{x_{1}}{m_{1}}\cdots\binom{x_{k}}{m_{k}}=\begin{cases}
1 & \text{if }x_{1}+\cdots+x_{k}=0\\
0 & \text{otherwise}
\end{cases}.
\]
\end{lem}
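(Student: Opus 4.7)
The plan is to collapse the multi-index alternating sum into a single closed-form binomial coefficient via Vandermonde, and then read off the value modulo $p$ (where $q=p^{r}$) by applying Lucas' theorem.

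First, I would fix integer lifts $x_{i}\in\{0,1,\dots,q-1\}$ and set $N=x_{1}+\cdots+x_{k}\in\mathbb{Z}_{\geq 0}$, so that each $\binom{x_{i}}{m_{i}}$ is an ordinary non-negative integer that vanishes unless $m_{i}\leq x_{i}$. Regrouping by $s=m_{1}+\cdots+m_{k}$ and applying the Chu--Vandermonde identity collapses the inner sum to $\binom{N}{s}$, reducing the left-hand side to the one-dimensional alternating partial sum $\sum_{s=0}^{q-1}(-1)^{s}\binom{N}{s}$. A standard induction on $T$ via Pascal's rule gives the telescoped identity $\sum_{s=0}^{T}(-1)^{s}\binom{N}{s}=(-1)^{T}\binom{N-1}{T}$ (valid for all $N\geq 0$ with the convention $\binom{-1}{T}=(-1)^{T}$), so the entire sum equals $(-1)^{q-1}\binom{N-1}{q-1}$. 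It remains to evaluate this modulo $p$.

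Next I would apply Lucas' theorem. The base-$p$ expansion of $q-1=p^{r}-1$ has digit $p-1$ in each of the positions $0,1,\dots,r-1$, so Lucas' theorem gives $\binom{N-1}{q-1}\not\equiv 0\pmod p$ if and only if each of the low $r$ base-$p$ digits of $N-1$ is $p-1$, which is equivalent to $N-1\equiv q-1\pmod q$, i.e.\ to $N\equiv 0\pmod q$. In that case the Lucas product evaluates to $1$, and since $(-1)^{q-1}\equiv 1\pmod p$ in every characteristic (trivially for $p=2$, and because $q-1$ is even for odd $p$), the sum reduces to $1$; otherwise it vanishes mod $p$. The edge case $N=0$ is handled by the same formula via the convention above, and also directly: only the zero tuple $m_{1}=\cdots=m_{k}=0$ contributes and yields $1$.

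The only real obstacle is the Lucas step: I would need to check both directions of the digit characterization ``the low $r$ base-$p$ digits of $N-1$ are all $p-1$'' $\iff$ ``$N\equiv 0\pmod q$'' carefully, and verify that the sign $(-1)^{q-1}$ reduces to $1$ uniformly in $p$. Everything else is a routine chain of classical binomial identities.
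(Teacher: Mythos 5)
Your proposal is correct and takes essentially the same route as the paper: both use the Chu--Vandermonde identity to collapse the multi-index sum to the single alternating sum $\sum_{s\leq q-1}(-1)^{s}\binom{N}{s}$ with $N=x_{1}+\cdots+x_{k}$, whose value detects $N\equiv 0\pmod q$. The only difference is that the paper simply asserts this one-variable evaluation (it is taken from the cited work of Blasiak et al.), whereas you supply its proof via the telescoped identity $\sum_{s=0}^{q-1}(-1)^{s}\binom{N}{s}=(-1)^{q-1}\binom{N-1}{q-1}$ together with Lucas' theorem and the sign check $(-1)^{q-1}\equiv 1\pmod p$; this added detail is correct and consistent with the paper's argument.
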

Note that the binomial coefficient is well defined modulo $p$ due
to Lucas' Theorem.
\begin{proof}
This follows since 
\[
\sum_{m\leq q-1}(-1)^{m}\binom{x_{1}+\cdots+x_{k}}{m}=\begin{cases}
1 & \text{if }x_{1}+\cdots+x_{k}=0\\
0 & \text{otherwise}
\end{cases},
\]
and 
\[
\binom{x_{1}+\cdots+x_{k}}{m}=\sum_{m_{1}+\cdots+m_{k}=m}\binom{x_{1}}{m_{1}}\cdots\binom{x_{k}}{m_{k}}.
\]
\end{proof}
For $x_{1},\dots,x_{k}\in\left(\mathbb{Z}/q\mathbb{Z}\right)^{n}$
let 
\[
E_{q}:\left(\left(\mathbb{Z}/q\mathbb{Z}\right)^{n}\right)^{q}\rightarrow\mathbb{F}_{p}
\]
be defined by
\begin{equation}
E_{q}(x_{1},\dots,x_{q})=\prod_{i=1}^{n}\left(\sum_{m_{1}+\cdots+m_{q}\leq q-1}(-1)^{m_{1}+\cdots+m_{q}}\binom{x_{1i}}{m_{1}}\cdots\binom{x_{qi}}{m_{q}}\right)\label{eq:E_q_def}
\end{equation}
where $x_{ji}$ denotes the $i^{th}$ coordinate of the vector $x_{j}$.
For any $q$ we have that 
\[
E_{q}\left(x_{1},\dots,x_{q}\right)=\begin{cases}
1 & \text{if }x_{1}+x_{2}+\cdots+x_{q}=0\\
0 & \text{otherwise}
\end{cases},
\]
and $E_{q}(x,\dots,x)=1$ for any $x\in\left(\mathbb{Z}/q\mathbb{Z}\right)^{n}$.
\begin{prop} \label{prop:slice_rank}
\label{prop: slice_rank_E_q}The slice-rank of $E_{q}$ on $\left(\mathbb{Z}/q\mathbb{Z}\right)^{n}$
is at most $q\cdot\gamma_{q}^{n}$.
\end{prop}
\begin{proof}
Expanding the product form for $E_{q}\left(x_{1},\dots,x_{q}\right)$
appearing in (\ref{eq:E_q_def}) as a polynomial, we may write $E_{q}\left(x_{1},\dots,x_{q}\right)$
as a linear combination of terms of the form 
\[
\left(\binom{x_{11}}{m_{11}}\cdots\binom{x_{1n}}{m_{1n}}\right)\cdots\left(\binom{x_{q1}}{m_{q1}}\cdots\binom{x_{qn}}{m_{qn}}\right)
\]
where $m_{ij}\leq q-1$ for every $i,j$ and 
\[
\sum_{i=1}^{q}\sum_{j=1}^{n}m_{ij}\leq(q-1)n.
\]
Thus, for each term there is a coordinate $i$ such that 
\[
\sum_{j=1}^{n}m_{ij}\leq\frac{(q-1)n}{q},
\]
and by always slicing away the lowest degree piece, and by grouping terms according to this lowest degree piece, it follows that
\begin{equation}
\slicerank(E_{q})\leq q\cdot\#\left\{ v\in\{0,1,\dots,q-1\}^{n}:\ \sum_{i=1}^{n}v_{i}\le\frac{n(q-1)}{q}\right\} .\label{eq:intermediate_eq_1}
\end{equation}
By Lemma \ref{lem:number_of_monomials} this will be at most 
\[
q\cdot\left(\frac{1-x^{q}}{1-x}x^{-\frac{q-1}{q}}\right)^{n}
\]
and so by Proposition \ref{prop:slice_rank} we conclude that 
\[
\slicerank(E_{q})\leq q\cdot\gamma_{q}^{n}.
\]
\end{proof}
We now prove Theorem \ref{thm:q_el_not_distinct}. 
\begin{proof}
Suppose that $A\subset\left(\mathbb{Z}/q\mathbb{Z}\right)^{n}$ does
not contain a non-trivial $q$-tuple that sums to zero. Then when
restricted to $A^{q}$, $E_{q}$ will be a diagonal tensor taking
the value $1$ on the diagonal. Hence Lemma \ref{lem:Critical_Lemma}
implies that 
\[
|A|\leq\slicerank(E_{q}),
\]
and so 
\[
|A|\leq q\cdot\gamma_{q}^{n}.
\]
The factor of $q$ can be removed by an amplification argument. The
$m$-fold Cartesian product $A^{m}=A\times\cdots\times A$ viewed
as a subset of $\left(\mathbb{Z}/q\mathbb{Z}\right)^{nm}$ will not
contain a non-trivial $k$-tuple summing to $0$, and so by the same
slice rank argument
\[
|A|^{m}\leq q\cdot\gamma_{q}^{nm},
\]
and hence 
\[
|A|\leq q^{\frac{1}{m}}\gamma_{q}^{n}.
\]
Letting $m\rightarrow\infty$, it follows that $|A|\leq\gamma_{q}^{n}$.
\end{proof}

\specialsection*{Acknowledgements}

I would like to thank Will Sawin for his helpful comments and for
his simple proofs of Lemmas \ref{lem:Group-action-lemma} and \ref{lem:critical-indicator-function}, as well as the anonymous referees whose comments and suggestions helped improve the writing and exposition of the paper.
I would also like to thank Lisa Sauermann for her many helpful comments,
corrections, and suggestions, and for pointing out Lemmas \ref{lem:EGZ_Lemma},
\ref{lem:g_reduced}, and \ref{lem:k_reduced_to_p}. This work was
partially supported by the NSERC PGS-D scholarship, and by Ben Green's
ERC Starting Grant 279438, Approximate Algebraic Structure and Applications.

\bibliographystyle{amsplain}

\providecommand{\bysame}{\leavevmode\hbox to3em{\hrulefill}\thinspace}
\providecommand{\MR}{\relax\ifhmode\unskip\space\fi MR }
\providecommand{\MRhref}[2]{%
  \href{http://www.ams.org/mathscinet-getitem?mr=#1}{#2}
}
\providecommand{\href}[2]{#2}

\end{document}